\theoremstyle{plain}
\theoremstyle{plain}
\newtheorem{thm}{Theorem}
\newtheorem{prop}{Proposition}
\newtheorem{lem}{Lemma}
\theoremstyle{definition}
\newtheorem{rem}{Remark}
\newtheorem{definition}{Definition}
\begin{document}
\setcounter{page}{1}

\title{ Lattice structure on bounded homomorphisms between topological lattice rings}
\author[Omid Zabeti ]{Omid Zabeti}

\address{ Department of Mathematics, Faculty of Mathematics, University of Sistan and Baluchestan, P.O. Box: 98135-674, Zahedan, Iran.}
\email{{o.zabeti@gmail.com}}

\subjclass[2010]{ 13J25, 06F30.}

\keywords{Locally solid $\ell$-ring, bounded group homomorphism, lattice ordered ring.}

\date{Received: xxxxxx; Revised: yyyyyy; Accepted: zzzzzz.}

\begin{abstract}
Suppose $X$ is a locally solid lattice ring. It is known that there are three classes of bounded group homomorphisms on $X$ whose topological structures make them again topological rings. In this note, we consider lattice structure on them; more precisely, we show that, under some mild assumptions, they are locally solid lattice rings.
\end{abstract}
\maketitle

%\date{\today}
%\maketitle

%{\bf Abstract:} {\normalsize In the present paper, several properties of unbounded absolutely weak Dunford-Pettis operators are investigated. }

%{\bf MSC:} {\normalsize .. }

%{\bf Keywords:} {\normalsize ..}

\section{Introduction and Preliminaries}
Let us start with some motivation. Topological rings usually appear in many contexts in  functional analysis. The ring of all continuous functions on a topological space; where the topology is given by pointwise convergence, the integers with discrete topology, the ring of all of matrices with entries in a topological ring; where the topology is given by pointwise convergence are all examples of topological rings. Many of these examples have also lattice structures. So, topological lattice rings come readily to mind as an interesting subject to study in the category of all rings. Furthermore, when we have topological lattice algebraic structures, it is a natural and interesting direction to investigate functions ( homomorphisms) which respect topological, lattice and algebraic structures.
%It is a natural direction to consider lattice structures on known algebraic concepts like groups or vector spaces. Moreover, topological aspects of them and essence of their connections with order structures are interesting in their own rights as well as in applications in other disciplines.

The concept of a lattice group ($\ell$-group, for short) was firstly investigated in \cite{B,C}. In addition, topological $\ell$-groups as an extension of topological Riesz spaces are appeared in \cite{S1,S2}, at first. Although, Riesz spaces are widely investigated in many directions for decades, lattice groups are rarely considered in the literatures; only recently, a comprehensive reference announced regarding basic properties of topological $\ell$-groups ( see \cite{H} for more details).

Nevertheless, the notion of a lattice ring ( $\ell$-ring) is even considered less than $\ell$-groups in the contexts. To our best knowledge, it is initially investigated in \cite{BP,J}. The situation got stricter while adding topological notion to them; the earliest special literature is \cite{W}.

Note that since topological $\ell$-groups are a generalization of topological Riesz spaces which contain many known and applicable objects such as Banach lattices and examples therein, they are investigated in more details at least in the contexts so that topological $\ell$-rings seem to be largely unexplored with respect to  topological $\ell$-groups. On the other hand, topological rings arise almost in many directions of topological fields; for example, the completion of a topological field is always a topological ring. Moreover, the set of all real continuous functions on a Hausdorff topological space, the set of all matrices defined on a field, are examples of rings which are widely useful in the literatures. So, it is of independent interest to discover different directions of rings such as topological and order notions; topological and order aspects are considered in several contexts, separately ( see \cite{AGM, J, S, W}, for example) but using both order and topological ones have been investigated not so much.

In \cite{Z1}, Mirzavaziri and the author considered three non-equivalent classes of bounded group homomorphisms on a topological ring and endowed them with appropriate topologies which make them again topological rings. Now, suppose $X$ is a locally solid $\ell$-ring. In this note, our attempt is to consider lattice structures on these classes of bounded homomorphisms. In fact, we show that under some mild hypotheses, they configure locally solid $\ell$-rings.

For  recent progress on topological $\ell$-groups as well as basic expositions on these notions, see \cite{H}. Finally, for undefined terminology, general theme about $\ell$-rings and the related subjects, we refer the reader to \cite{J}.

Let us first, recall some required notions and terminology. Suppose $X$ is a topological ring. A set $B\subseteq X$ is called bounded if for each zero neighborhood $W\subseteq X$, there is a zero neighborhood $V\subseteq X$ such that $VB\subseteq W$ and $BV\subseteq W$. Now, assume that $G$ is a topological group; a subset $B\subseteq G$ is said to be bounded if for each neighborhood $U$ at the identity, there is a positive integer $n$ with $B\subseteq nU$.

By a topological lattice group ( $\ell$-group), we mean an abelian topological group which is also a lattice at the same time such that the lattice operations are continuous with respect to the assumed topology. A topological lattice ring ( $\ell$-ring) is a topological ring which is simultaneously an $\ell$-group such that the multiplication and order structure are compatible via the inequality $|x\cdot y|\leq |x|\cdot|y|$; for more details, we refer the reader to \cite{H}.

A Birkhoff and Pierce ring ( $f$-ring) is a lattice ordered ring with this property: $a\wedge b=0$ and $c\geq 0$ imply that $ca\wedge b=ac\wedge b=0$. For ample facts regarding this subject, see \cite{J}. It was initially presented by Birkhoff and Pierce in \cite{BP} to illustrate some understandable examples in lattice ring theory and apparently, it turned out to have many interesting and fruitful tools among the category of lattice rings.

An $\ell$-group $G$ is called {\bf Dedekind complete} if every non-empty bounded above subset of $G$ has a supremum. $G$ is {\bf Archimedean} if $nx\leq y$ for each $n\in \Bbb N$ implies that $x\leq 0$. One may verify easily that every Dedekind complete $\ell$-group is Archimedean. In this note, all topological groups are considered to be abelian. A subset $S\subseteq G$ is called {\bf solid} if $x\in G$, $y\in S$, and $|x|\le |y|$ imply that $x\in S$. Topological $\ell$-group $(G,\tau)$ is said to be locally solid if $\tau$ contains a base of neighborhoods at identity consists of solid sets. $S$ is said to be {\bf order bounded} if it is contained in an order interval.

Suppose $G$ is a topological $\ell$-group. A net $(x_{\alpha})\subseteq G$ is said to be {\bf order} convergent to $x\in G$ if there exists a net $(z_{\beta})$ ( possibly over a different index set) such that $z_{\beta}\downarrow 0$ and for every $\beta$, there is an $\alpha_0$ with $|x_{\alpha}-x|\leq z_{\beta}$ for each $\alpha\ge \alpha_0$. A subset $A\subseteq G$ is called {\bf order closed} if it contains limits of all order convergent nets which are lying on $A$.

Keep in mind that topology $\tau$ on a topological $\ell$-group $(G,\tau)$ is referred to as {\bf Fatou} if it has a local basis at the identity consists of solid order closed neighborhoods.

Suppose $G$ and $H$ are $\ell$-groups. A homomorphism $T:G\to H$ is said to be {\bf positive} if it maps positive elements of $G$ into positive ones in $H$.

Now, we recall some definition we need in the sequel ( see \cite{Z1} for further notifications about these facts). It should be mentioned here that in \cite{Z1}, the authors used the notion $\sf B(X,Y)$ for rings of all bounded group homomorphisms between topological rings; in this note, we replace it with $\sf Hom(X,Y)$ in compatible with \cite{KZ} for homomorphisms as well as to show their nature as a homomorphism not an operator.
\begin{definition}\rm
Let $X$ and $Y$ be two topological rings. A group homomorphism $T:X \to
Y$ is said to be
\begin{itemize}
\item[$(1)$] \emph{{\sf nr}-bounded} if there exists a
zero neighborhood $U\subseteq X$  such that $T(U)$ is bounded in $Y$;

\item[$(2)$] \emph{{\sf br}-bounded} if for every bounded set $B
\subseteq X$, $T(B)$ is bounded in $Y$.
\end{itemize}
\end{definition}

The set of all {\sf nr}-bounded ({\sf br}-bounded) homomorphisms
from a topological ring $X$ to a topological ring $Y$ is denoted
by ${\sf Hom_{nr}}(X,Y)$ (${\sf Hom_{br}}(X,Y)$). We write ${\sf
Hom}(X)$ instead of ${\sf Hom}(X,X)$.

\smallskip
Now, assume $X$ is a topological ring. The class of all ${\sf
nr}$-bounded group homomorphisms on $X$ equipped with the topology of
uniform convergence on some zero neighborhood is denoted by
${\sf Hom_{nr}}(X)$. Observe that a net $(S_{\alpha})$ of ${\sf
nr}$-bounded homomorphisms converges uniformly on a neighborhood $U$ to a homomorphism $S$  if for each neighborhood $V$  there exists an $\alpha_0$ such that for each
$\alpha\geq\alpha_0$, $(S_{\alpha}-S)(U)\subseteq V$.

\smallskip
The class of all ${\sf br}$-bounded group homomorphisms on $X$ endowed
with the topology of uniform convergence on bounded sets is denoted
by ${\sf Hom_{br}}(X)$. Note that a net $(S_{\alpha})$ of ${\sf
br}$-bounded homomorphisms uniformly converges to a homomorphism $S$
on a bounded set $B\subseteq X$ if for each zero neighborhood $V$
there is an $\alpha_0$ with $(S_{\alpha}-S)(B) \subseteq V$ for
each $\alpha\ge \alpha_0$.

\smallskip
The class of all continuous group homomorphisms on $X$ equipped with the
topology of ${\sf cr}$-convergence is denoted by ${\sf Hom_{cr}}(X)$.
A net $(S_{\alpha})$ of continuous homomorphisms ${\sf cr}$-converges
to a homomorphism $S$ if for each zero neighborhood $W$, there
is a neighborhood $U$ such that for every zero neighborhood $V$ there exists an $\alpha_0$ with
$(S_{\alpha}-S)(U)\subseteq VW$ for each $\alpha\geq\alpha_0$.

\smallskip
Note that ${\sf
Hom_{nr}}(X)$, ${\sf Hom_{br}}(X)$, and ${\sf Hom_{cr}}(X)$ form subrings
of the ring of all group homomorphisms on $X$, in which, the multiplication is given by function composition.

In contrast with the case of all bounded homomorphisms between topological groups ( considered in \cite{KZ}), there are no more relations between these classes of bounded group homomorphisms between topological rings; see \cite[Example 2.1, Example 2.2, Example 3.1]{Z1} for some examples which illustrate the situation.

\section{Main Results}
First, we prove a version of \cite[Theorem 1.10]{AB} in terms of topological $\ell$-groups.
\begin{lem}\label{1}
Suppose $G$ and $H$ are $\ell$-groups with $H$ Archimedean. Moreover, assume that $T:G_{+}\to H_{+}$ preserves the addition group operations; that is $T(x+y)=T(x)+T(y)$ holds for positive elements $x,y \in G$. Then $T$ has a unique extension to a positive group homomorphism. In addition, this extension is determined ( denoted by $T$, again) via $T(x)=T(x^{+})-T(x^{-})$.
\end{lem}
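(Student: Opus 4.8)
The plan is to mimic the classical Kantorovich extension argument (the additive version of \cite[Theorem 1.10]{AB}) adapted from Riesz spaces to $\ell$-groups. Given $T\colon G_{+}\to H_{+}$ additive, I would first record the forced definition: if an extension $\widehat T$ exists and is a group homomorphism, then for any $x\in G$ we must have $\widehat T(x)=\widehat T(x^{+})-\widehat T(x^{-})=T(x^{+})-T(x^{-})$, since $x=x^{+}-x^{-}$ with $x^{+},x^{-}\in G_{+}$. This immediately gives uniqueness, so the whole content is existence, i.e.\ checking that the formula $T(x):=T(x^{+})-T(x^{-})$ actually defines a group homomorphism that agrees with the original $T$ on $G_{+}$ (the latter being clear since $x\in G_{+}$ forces $x^{+}=x$, $x^{-}=0$, and $T(0)=0$ follows from additivity).

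The key step is additivity of the extension. Here I would use the standard device: if $x=u-v$ with $u,v\in G_{+}$ (not necessarily the positive/negative parts), then $T(x^{+})-T(x^{-})=T(u)-T(v)$. To see this, note $x^{+}-x^{-}=x=u-v$ gives $x^{+}+v=u+x^{-}$ in $G_{+}$, so applying additivity of $T$ on the positive cone yields $T(x^{+})+T(v)=T(u)+T(x^{-})$, and rearranging gives the claim. Once this "representation-independence" is in hand, additivity on all of $G$ is routine: for $x,y\in G$ write $x=x^{+}-x^{-}$, $y=y^{+}-y^{-}$, so $x+y=(x^{+}+y^{+})-(x^{-}+y^{-})$ is a difference of two positive elements, and the lemma above gives $\widehat T(x+y)=T(x^{+}+y^{+})-T(x^{-}+y^{-})=T(x^{+})+T(y^{+})-T(x^{-})-T(y^{-})=\widehat T(x)+\widehat T(y)$. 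Positivity of $\widehat T$ is immediate: $x\in G_{+}$ implies $\widehat T(x)=T(x)\in H_{+}$.

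I expect the main obstacle to be the one place where the Archimedean hypothesis on $H$ is actually needed, and I want to locate it carefully. For the purely additive statement as phrased, the argument above seems to go through for any $\ell$-group $H$, which is suspicious; the Archimedean property is essential in the Riesz-space version precisely to guarantee that the extension of a positive \emph{operator} is again order-bounded/positive when one works with arbitrary order intervals, or to handle decompositions of the form $x^{+}\wedge x^{-}=0$ interacting with scalar multiplication. So the real subtlety is whether, in the group setting, one needs $H$ Archimedean to ensure the representation $x=u-v\Rightarrow x^{+}\le u$ passes correctly through $T$, or whether it is needed only so that the extension is the \emph{unique} positive homomorphism among a larger class. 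I would therefore check the decomposition lemma $x^{+}+v=u+x^{-}$ step with care: this identity holds in any $\ell$-group, so additivity of the extension does not consume the hypothesis, and I would then pin down that Archimedeanness is what makes the extension well-behaved with respect to the lattice operations (e.g.\ $\widehat T(x\vee y)=\widehat T(x)\vee\widehat T(y)$ is \emph{not} claimed here, only that $\widehat T$ is a positive group homomorphism) — in other words I would be prepared to find that the hypothesis is used lightly or is there for consistency with later applications, and I would state explicitly in the write-up exactly which inequality invokes it.
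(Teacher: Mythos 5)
Your proof is correct and takes essentially the same route as the paper, which simply defers the additivity argument to the proof of \cite[Theorem 1.10]{AB} (the representation-independence step $x=u-v\Rightarrow T(x^{+})-T(x^{-})=T(u)-T(v)$ via $x^{+}+v=u+x^{-}$ that you spell out) and then notes that additivity forces $S(-x)=-S(x)$. Your suspicion about the Archimedean hypothesis is also accurate: it is never consumed in the group setting, since in the Riesz-space version it is needed only to upgrade rational homogeneity to real homogeneity, a step with no analogue for group homomorphisms.
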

\begin{proof}
Consider the extension $S$ from $G$ into $H$ determined by  $S(x)=T(x^{+})-T(x^{-})$. Using the basic properties of $\ell$-groups (\cite[Lemma 4.1]{H}) and the proof of  \cite[Theorem 1.10]{AB}, we conclude that $S$ is additive. In order to prove that $S$ preserves the inverse operation, note that the identity $0=S(x+(-x))=S(x)+S(-x)=S(x)-S(x)$, implies that $S(-x)=-S(x)$, as we wanted.
\end{proof}
In this step, we need a type of Riesz decomposition property in $\ell$-groups; the proof relies on just addition and modulus in a Riesz space so that it can be converted without any change, using identities of \cite[Lemma 4.1]{H}. For a proof in Riesz spaces, see \cite[Theorem 1.13]{AB}.
\begin{lem}\label{2}
Suppose $|x|\leq |y_1+y_2|$ holds in an $\ell$-group $G$. Then there exist $x_1,x_2\in G$ such that $x=x_1+x_2$ and $|x_i|\leq|y_i|$. If $x$ is positive, $x_1,x_2$ can be chosen to be positive.
\end{lem}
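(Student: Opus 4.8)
The plan is to imitate the Riesz-space proof of \cite[Theorem 1.13]{AB} line by line, replacing each vector-lattice manipulation by the corresponding $\ell$-group identity from \cite[Lemma 4.1]{H}. Since $|x|\le|y_1+y_2|\le|y_1|+|y_2|$ by the triangle inequality $|a+b|\le|a|+|b|$, it is enough to produce the decomposition under the (formally weaker) hypothesis $|x|\le|y_1|+|y_2|$.

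The first step is to handle the general case directly, by ``truncating'' $x$ to the order interval determined by $y_1$: put
\[
x_1=\bigl(-|y_1|\bigr)\vee\bigl(x\wedge|y_1|\bigr),\qquad x_2=x-x_1 .
\]
Then $-|y_1|\le x_1\le|y_1|$ is immediate (using $x\wedge|y_1|\le|y_1|$ and $-|y_1|\le|y_1|$), so $|x_1|\le|y_1|$, and $x_1+x_2=x$ by construction. The real content is the estimate $|x_2|\le|y_2|$. Using $c-(a\vee b)=(c-a)\wedge(c-b)$ and $c-c\wedge b=(c-b)\vee 0$, one rewrites $x_2=\bigl(x+|y_1|\bigr)\wedge\bigl((x-|y_1|)\vee 0\bigr)$. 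For the upper bound, $x_2\le(x-|y_1|)\vee 0\le(|x|-|y_1|)\vee 0\le|y_2|$, the last inequality because $|x|-|y_1|\le|y_2|$ and $0\le|y_2|$. For the lower bound, $x+|y_1|\ge-|x|+|y_1|\ge-|y_2|$ and $(x-|y_1|)\vee 0\ge 0\ge-|y_2|$, so $x_2\ge-|y_2|$. Hence $-|y_2|\le x_2\le|y_2|$, i.e.\ $|x_2|\le|y_2|$.

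For the positive case, I would simply note that the same formulas already do the job: when $x\ge 0$ we get $x\wedge|y_1|\ge 0$, hence $x_1=x\wedge|y_1|\ge 0$, and then $x_2=x-x_1=(x-|y_1|)\vee 0\ge 0$ as well. So no separate construction is required.

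I do not expect a genuine obstacle here; the only point that needs attention is that one should \emph{not} try to deduce the signed case from the positive one by decomposing $x^{+}$ and $x^{-}$ separately, because reassembling $x=x^{+}-x^{-}$ would only give $|x_i|\le 2|y_i|$. The truncation above sidesteps this, and since every manipulation used is valid in any $\ell$-group (translation-invariance of $\vee$ and $\wedge$, the triangle inequality, and $c-c\wedge b=(c-b)\vee 0$), the Riesz-space argument transfers without change, exactly as the remark preceding the statement asserts.
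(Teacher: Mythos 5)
Your proof is correct, and it is essentially the proof the paper intends: the paper gives no argument of its own but defers to the truncation proof of \cite[Theorem 1.13]{AB}, which is exactly the decomposition $x_1=(-|y_1|)\vee(x\wedge|y_1|)$, $x_2=x-x_1$ that you carry out, with every step (translation invariance of $\vee,\wedge$, the triangle inequality, $c-c\wedge b=(c-b)\vee 0$) valid in an abelian $\ell$-group via \cite[Lemma 4.1]{H}. Nothing further is needed.
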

Now, we consider a version of \cite[Theorem 1.14]{AB} assuring us under a suitable condition, the positive part of  a group homomorphism can exist.
\begin{lem}\label{3}
Let $G$ and $H$ be topological $\ell$-groups with $H$ Archimedean and  $T:G\to H$ be a homomorphism between $\ell$-groups such that $\sup\{Ty: 0\le y\leq x\}$ exists for each positive $x\in G$. Then, $T^{+}=T\vee 0$ exists and is determined via
\[T^{+}(x)=\sup\{Ty: 0\leq y\leq x\},\]
for each $x\in G_{+}$.
\end{lem}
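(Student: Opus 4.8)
The plan is to mimic the classical Riesz space argument (Aliprantis–Burkinshaw, Theorem 1.14) but work entirely within the $\ell$-group setting, using the identities of \cite[Lemma 4.1]{H} in place of Riesz space manipulations. Define $R\colon G_{+}\to H_{+}$ by $R(x)=\sup\{Ty:0\le y\le x\}$; this is well defined by hypothesis and clearly $R(x)\ge T0=0$ and $R(x)\ge Tx$, so $R$ dominates both $T$ and $0$ on $G_{+}$. The first substantial step is to verify that $R$ is additive on $G_{+}$: for $x_1,x_2\in G_{+}$, the inclusion $\{y_1+y_2:0\le y_i\le x_i\}\subseteq\{y:0\le y\le x_1+x_2\}$ gives $R(x_1)+R(x_2)\le R(x_1+x_2)$, and the reverse inequality follows from the Riesz-type decomposition of Lemma~\ref{2}: any $y$ with $0\le y\le x_1+x_2$ splits as $y=y_1+y_2$ with $0\le y_i\le x_i$, whence $Ty=Ty_1+Ty_2\le R(x_1)+R(x_2)$, and taking the supremum over $y$ finishes it. Here one must be a little careful that suprema behave well under translation in an $\ell$-group, i.e. $\sup(A)+c=\sup(A+c)$, which is a standard consequence of the group being an $\ell$-group.

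Once $R$ is additive on $G_{+}$, Lemma~\ref{1} (applicable since $H$ is Archimedean) extends it uniquely to a positive group homomorphism on all of $G$, which I will also call $R$, given by $R(x)=R(x^{+})-R(x^{-})$. It remains to identify $R$ with $T\vee 0$ in the lattice of group homomorphisms $G\to H$. For this I would first check $R\ge T$ and $R\ge 0$ as homomorphisms: on $G_{+}$ both are immediate from the definition of the supremum, and for a general homomorphism the relation $S\ge 0$ means $S$ is positive while $R-T$ positive is equivalent to $(R-T)(x)\ge 0$ for all $x\in G_{+}$, which we have. Then, given any group homomorphism $S$ with $S\ge T$ and $S\ge 0$, I must show $S\ge R$; again it suffices to show $(S-R)(x)\ge 0$ for $x\in G_{+}$, i.e. $S(x)\ge\sup\{Ty:0\le y\le x\}$. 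For each such $y$ we have $S(x)=S(y)+S(x-y)\ge S(y)\ge Ty$, using $S\ge 0$ on $x-y\ge 0$ and $S\ge T$ on $y$; taking the supremum over $y$ gives $S(x)\ge R(x)$. This shows $R$ is the least upper bound of $T$ and $0$, i.e. $R=T^{+}=T\vee 0$, with the stated formula on $G_{+}$.

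The main obstacle I anticipate is not any single deep step but rather the bookkeeping around suprema in an $\ell$-group that need not be Dedekind complete: one must ensure every supremum written down actually exists (this is exactly what the hypothesis on $T$ guarantees, together with the fact that sets like $\{y_1+y_2:0\le y_i\le x_i\}$ are cofinal in $\{y:0\le y\le x_1+x_2\}$) and that the order-theoretic manipulations — translation invariance of $\sup$, monotonicity, and splitting suprema over a sum of index sets — are justified purely from the $\ell$-group axioms and \cite[Lemma 4.1]{H}. A secondary point to handle cleanly is the passage between "positive homomorphism" and "$S\ge 0$ in the homomorphism ordering," and the observation that the pointwise order on positive homomorphisms on $G_{+}$ determines the order on their extensions, so that checking inequalities on $G_{+}$ is enough. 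Note that the topological hypotheses on $G$ and $H$ play no role here beyond fixing the ambient categories; the argument is purely order-theoretic, so I would not invoke continuity anywhere in this proof.
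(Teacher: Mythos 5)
Your proposal is correct and follows essentially the same route as the paper's proof: define the candidate on $G_{+}$ by the supremum formula, prove additivity via the Riesz-type decomposition of Lemma~\ref{2}, extend to a positive homomorphism by Lemma~\ref{1}, and verify minimality among upper bounds of $T$ and $0$. Your write-up is only slightly more explicit (e.g., spelling out $S(x)=S(y)+S(x-y)\ge S(y)\ge Ty$ in the minimality step), but the substance matches the paper.
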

\begin{proof}
Define $S:G_{+}\to H_{+}$ by $S(x)=\sup\{Ty: 0\leq y\leq x\}$ for each positive $x\in G$. Then, we show that $S$ is additive. Fix $u,v\in G_{+}$. For every positive $y\leq u$ and $z\leq v$, we have $T(y)+T(z)=T(y+z)\leq S(u+v)$ so that $S(u)+S(v)\leq S(u+v)$. On the other hand, if $y\leq u+v$ for a positive element $y$, by Lemma \ref{2}, there are $y_1,y_2\in G_{+}$ such that $y=y_1+y_2$, $y_1\leq u$, and $y_2\leq v$. This implies that $T(y)=T(y_1)+T(y_2)\leq S(u)+S(v)$ asserting that $S$ is additive. By Lemma \ref{1}, $S$ has an extension to a positive homomorphism (denoted by $S$) from $G$ into $H$.
Suppose for a positive homomorphism $R$, we have $T\leq R$. Fix $x\in G_{+}$. For every positive $y\leq x$, we have
$Ty\leq Ry\leq Rx$, resulting in $S\leq R$.
We see that $S=T^{+}$.
\end{proof}
Recall that a homomorphism $T:G\to H$ is said to be order bounded if it maps order bounded sets into order bounded ones. The set of all order bounded homomorphisms from $G$ into $H$ is denoted by $\sf{Hom^{b}(G,H)}$. One may justify that under group operations of homomorphisms defined in \cite{KZ} and invoking \cite[Theorem 4.9]{H}, $\sf{Hom^{b}(G,H)}$ is a group.
Now, we prove a Riesz-Kantorovich formulae for order bounded homomorphisms compatible with \cite[Theorem 1.18]{AB}. Observe that according to \cite[Remark 1]{EGZ}, not every order bounded homomorphism on a topological $\ell$-group is bounded.
\begin{thm}\label{4}
Suppose $G$ and $H$ are $\ell$-groups with $H$ Dedekind complete. Then, the group $Hom^b(G,H)$ of all order bounded homomorphisms is a Dedekind complete $\ell$-group. Moreover, $T^{+}$ is defined by
\[T^{+}(x)=\sup\{Ty:  0\leq y\leq x\},\]
for each $x\in G_{+}$.
\end{thm}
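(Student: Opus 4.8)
The plan is to follow the classical Riesz--Kantorovich template from \cite[Theorem 1.18]{AB}, but carried out entirely inside the category of $\ell$-groups using the substitutes for Riesz-space facts already assembled in Lemmas \ref{1}--\ref{3}. The first step is to observe that every order bounded homomorphism $T:G\to H$ satisfies the hypothesis of Lemma \ref{3}: for a positive $x\in G$, the set $\{Ty:0\le y\le x\}$ is the image under $T$ of the order interval $[0,x]$, hence order bounded in $H$; since $H$ is Dedekind complete this set has a supremum. Thus by Lemma \ref{3}, $T^{+}=T\vee 0$ exists in $\sf{Hom^{b}(G,H)}$ and is given by $T^{+}(x)=\sup\{Ty:0\le y\le x\}$ on $G_{+}$. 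Dually $T^{-}=(-T)^{+}=T^{+}-T$ exists, and then $|T|=T^{+}+T^{-}=2T^{+}-T$ exists; more generally $T\vee S=(T-S)^{+}+S$ and $T\wedge S=-((-T)\vee(-S))$ exist for all $T,S\in\sf{Hom^{b}(G,H)}$, so the group is a lattice. One should check that these lattice operations are compatible with the group structure (translation invariance of $\vee,\wedge$), which is immediate from the formula $T\vee S=(T-S)^{+}+S$ together with additivity of $R\mapsto R^{+}$ on the positive cone — but here a small verification is needed, namely that $T\mapsto T^{+}$ as defined is genuinely the least upper bound of $T$ and $0$ in the homomorphism group, which is exactly the last two sentences of the proof of Lemma \ref{3}.

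Next I would establish \emph{Dedekind completeness}. Let $\mathcal{D}\subseteq\sf{Hom^{b}(G,H)}$ be nonempty and bounded above by some $R$. Passing to the set of finite suprema of elements of $\mathcal{D}$ (which is directed upward and still bounded by $R$, using that the group is now a lattice) one may assume $\mathcal{D}$ is upward directed. Define, for $x\in G_{+}$,
\[
S(x)=\sup\{T(x):T\in\mathcal{D}\},
\]
which exists in $H$ because the set on the right is bounded above by $R(x)$ and $H$ is Dedekind complete. The crux is to show $S$ is additive on $G_{+}$: the inequality $S(u)+S(v)\ge S(u+v)$ is clear since $T(u)+T(v)\le S(u)+S(v)$ for each $T$, and the reverse inequality uses upward directedness — for $T_1,T_2\in\mathcal{D}$ pick $T\in\mathcal{D}$ with $T\ge T_1,T_2$ (as homomorphisms, hence $T(u)\ge T_1(u)$, $T(v)\ge T_2(v)$ on positive elements), so $T_1(u)+T_2(v)\le T(u)+T(v)=T(u+v)\le S(u+v)$, and sup over $T_1,T_2$ gives $S(u)+S(v)\le S(u+v)$. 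Here I would lean on \cite[Lemma 4.1]{H} for the manipulation of suprema and sums in the $\ell$-group $H$ (in particular $\sup(A+B)=\sup A+\sup B$ for bounded-above sets). Then Lemma \ref{1} extends $S$ to a positive homomorphism $S:G\to H$, and one checks $S=\sup\mathcal{D}$ in $\sf{Hom^{b}(G,H)}$: it dominates each $T\in\mathcal{D}$ by construction, and if $R'\ge T$ for all $T\in\mathcal{D}$ then $R'(x)\ge T(x)$ for positive $x$ gives $R'(x)\ge S(x)$, so $R'\ge S$. Finally $S$ is order bounded since $0\le S\le R$ and $\sf{Hom^{b}(G,H)}$ is a lattice, or directly because it is sandwiched between order bounded maps.

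The last point is that a Dedekind complete $\ell$-group is automatically Archimedean (noted in the Preliminaries), so all appeals to Lemmas \ref{1} and \ref{3} with ``$H$ Archimedean'' are legitimate under the standing hypothesis ``$H$ Dedekind complete''. \textbf{The main obstacle} I anticipate is the additivity argument for the supremum $S$ in the Dedekind completeness step: reducing to an upward-directed family and justifying the interchange of $\sup$ with $+$ in $H$ requires the $\ell$-group identities of \cite[Lemma 4.1]{H} to be invoked carefully, since in a general $\ell$-group one does not have the scalar-multiplication flexibility available in a Riesz space; everything else is a routine transcription of the Riesz-space proof with Lemmas \ref{1}, \ref{2}, \ref{3} standing in for the classical tools.
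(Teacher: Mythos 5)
Your proposal follows essentially the same route as the paper's proof: existence of $T^{+}$ via Lemma \ref{3} (its hypothesis supplied by order boundedness of $T$ together with Dedekind completeness of $H$), the lattice structure of ${\sf Hom^{b}(G,H)}$ by translation invariance, and Dedekind completeness by forming pointwise suprema of a directed family, checking additivity on $G_{+}$, and extending by Lemma \ref{1}. In fact you are more detailed than the paper, which only treats increasing nets $0\le T_{\alpha}\uparrow\le T$ of positive homomorphisms, whereas you carry out the reduction of an arbitrary nonempty bounded-above set $\mathcal{D}$ to an upward directed one and spell out the directedness argument for additivity.

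One step, however, needs a patch. Lemma \ref{1} applies to additive maps $G_{+}\to H_{+}$, but your $S(x)=\sup\{T(x):T\in\mathcal{D}\}$ need not take values in $H_{+}$ when $\mathcal{D}$ is an arbitrary bounded-above family, since its members need not be positive homomorphisms; so the sentence ``Lemma \ref{1} extends $S$ to a positive homomorphism'' is not licensed as written (indeed no positive extension can exist unless $S(G_{+})\subseteq H_{+}$), and the same issue affects your later remark ``$0\le S\le R$.'' The standard normalization --- which is exactly what the paper's restriction to $0\le T_{\alpha}$ implicitly encodes --- repairs this: fix $T_{0}\in\mathcal{D}$, replace $\mathcal{D}$ by $\{T\vee T_{0}:T\in\mathcal{D}\}$ (still directed, with the same upper bounds, hence the same supremum), and work with the positive family $\{T\vee T_{0}-T_{0}\}$; its pointwise supremum on $G_{+}$ is $H_{+}$-valued and additive, Lemma \ref{1} applies, and adding $T_{0}$ back yields $\sup\mathcal{D}$, which is order bounded because it is sandwiched between the order bounded homomorphisms $T_{0}$ and $R$. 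With this adjustment your argument is complete and coincides in substance with the paper's.
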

\begin{proof}
For every order bounded homomorphism $T$, note that
\[\sup\{Ty: 0\leq y\leq x\}=\sup T[0,x].\]
By Lemma \ref{3}, $T^{+}$ exists. By \cite[Lemma 4.1]{H}, $\sf {Hom^b(G,H)}$ is an $\ell$-group. To prove $\sf {Hom^b(G,H)}$ is Dedekind complete, we proceed the same line as in the proof of \cite[Theorem 1.18]{AB}. Suppose $0\leq T_{\alpha}\uparrow\leq T$ in $\sf {Hom^{b}(G,H)}$. For each $x\in G_{+}$, $S(x)=\sup\{T_{\alpha}(x)\}$ exists in $H$. The identity $T_{\alpha}(x+y)=T_{\alpha}(x)+T_{\alpha}(y)$ implies that $S$ is an additive map between positive parts. So, by Lemma \ref{1}, it has an extension to a positive homomorphism ( denoted by $S$), resulting in $T_{\alpha}\uparrow S$, as desired.
\end{proof}
\begin{rem}
Suppose $X$ is a topological ring so that a topological abelian group in its own right. Recall that a subset $B\subseteq X$ is said to be bounded ( in the sense of a topological group) if for each neighborhood $U$ of the identity, there is an $n\in \Bbb N$ with $B\subseteq nU$. Spite to the case of topological vector spaces, not every singleton in a topological group is bounded ( see \cite{Z}). Nevertheless, in many classical topological groups and also connected ones, they are bounded. Therefore, from now on, we assume that the corresponding topological groups have this mild property.
\end{rem}
\begin{prop}
Suppose $X$ is a topological ring. If $X$ is Hausdorff and every singleton in bounded in the sense of a topological group, then, so are ${\sf
Hom_{nr}}(X)$, ${\sf Hom_{br}}(X)$, and ${\sf Hom_{cr}}(X)$.
\end{prop}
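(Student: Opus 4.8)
The plan is to verify, for each of the three rings, the two assertions of the conclusion: that its topology is Hausdorff, and that each of its singletons is bounded in the group sense (recall that, by \cite{Z1}, all three are already topological rings, hence topological groups additively). I would work throughout from the explicit bases of zero-neighbourhoods of the three topologies described in \cite{Z1}: for ${\sf Hom_{br}}(X)$ these are the sets $\mathcal N(B,V)=\{T:T(B)\subseteq V\}$ with $B$ bounded in $X$ and $V$ a zero-neighbourhood of $X$; for ${\sf Hom_{nr}}(X)$ the set on which $T$ is tested is instead a zero-neighbourhood $U$ of $X$ on which $T$ is bounded; and for ${\sf Hom_{cr}}(X)$ the target condition takes the multiplicative form $T(U)\subseteq VW$ coming from the definition of ${\sf cr}$-convergence.

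For Hausdorffness I would use that a topological group is Hausdorff precisely when the intersection of all its neighbourhoods of $0$ is $\{0\}$. Suppose $S$ lies in every zero-neighbourhood of ${\sf Hom_{br}}(X)$. For each $x\in X$, the hypothesis that $\{x\}$ is bounded in $X$ makes $\mathcal N(\{x\},V)$ a zero-neighbourhood for every zero-neighbourhood $V$ of $X$, so $S(x)\in V$ for all such $V$; since $X$ is Hausdorff, $\bigcap_V V=\{0\}$, whence $S(x)=0$ and, $x$ being arbitrary, $S=0$. The cases of ${\sf Hom_{nr}}(X)$ and ${\sf Hom_{cr}}(X)$ run along the same lines: membership of $S$ in all zero-neighbourhoods forces $S$ to vanish on a fixed zero-neighbourhood $U$ of $X$ (after letting the target neighbourhood shrink --- and in the ${\sf cr}$ case also using continuity of the multiplication of $X$ to see that the sets $VW$ shrink to $\{0\}$), and then the singleton hypothesis, namely that each $x\in X$ lies in a finite sum $U+\dots+U$, upgrades this to $S=0$.

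For the boundedness of singletons, fix $S$ in one of the three rings and a basic zero-neighbourhood $\mathcal U$; I would produce $n\in\mathbb N$ with $S\in n\mathcal U$. In the ${\sf br}$ case, where $\mathcal U=\mathcal N(B,V)$, the definition of ${\sf br}$-boundedness gives that $S(B)$ is bounded in $X$, hence contained in a sum $V+\dots+V$ of some number $n$ of copies of $V$, i.e.\ $S\in\mathcal N(B,nV)$; the ${\sf nr}$ case is the same with a bounding zero-neighbourhood of $X$ in place of $B$, and the ${\sf cr}$ case uses continuity of $S$ together with the fact that $X$, having no proper open subgroup (which is exactly what the singleton hypothesis on $X$ says), is generated by each of its zero-neighbourhoods. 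The step I expect to be the real obstacle is then the last inclusion: deducing $S\in n\mathcal U=\mathcal U+\dots+\mathcal U$ from $S\in\mathcal N(B,nV)$, since a homomorphism cannot be split freely over a set that is not a subgroup, so one cannot simply distribute a decomposition $S(b)=v_1+\dots+v_n$ over the points $b\in B$. The route I would take is to recast ``every singleton of ${\sf Hom}(X)$ is bounded'' as ``${\sf Hom}(X)$ has no proper open subgroup'': an open subgroup $\mathcal H$ contains some basic $\mathcal U$, hence the open subgroup $\langle\mathcal U\rangle$ it generates, and one then proves $\langle\mathcal U\rangle$ exhausts the ring by combining the image-boundedness (resp. continuity) of the homomorphisms in $\mathcal U$ with the assumption that $X$ itself has no proper open subgroup. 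Carrying out this reduction, uniformly across the three slightly different topologies, is the real content of the argument; the Hausdorff part is routine by comparison.
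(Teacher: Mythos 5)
Your Hausdorff argument is correct and is essentially the paper's own proof: the paper establishes uniqueness of limits for each of the three topologies with exactly the ingredients you list --- in the ${\sf nr}$ and ${\sf cr}$ cases the hypothesis that each $x\in X$ lies in some $nU$ is what upgrades ``$T=S$ on the test neighbourhood $U$'' to ``$T=S$ on all of $X$'', while in the ${\sf br}$ case one evaluates at points, using that singletons of $X$ are bounded in the \emph{ring} sense. One small correction there: this ring-boundedness of $\{x\}$ is automatic from continuity of multiplication, so in the ${\sf br}$ case it is not the group-boundedness hypothesis that makes $\mathcal N(\{x\},V)$ a neighbourhood of zero; the group hypothesis is only needed in the ${\sf nr}$ and ${\sf cr}$ cases.

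The difference is in the second half. The paper's proof stops at Hausdorffness; it never addresses boundedness of singletons in the three homomorphism rings. You read the conclusion literally and attempt that part as well, but what you offer there is a programme, not a proof: you yourself isolate the decisive step --- passing from ``$S(B)$ is bounded, i.e.\ $S\in\mathcal N(B,nV)$'' to ``$S$ lies in $\mathcal N(B,V)+\dots+\mathcal N(B,V)$'', equivalently that the subgroup generated by a basic zero-neighbourhood is all of ${\sf Hom_{br}}(X)$ --- and then defer it (``carrying out this reduction is the real content''). The open-subgroup reformulation does not by itself close this gap: a homomorphism with $S(B)\subseteq V+\dots+V$ need not decompose as a sum of homomorphisms each mapping $B$ into $V$, and no mechanism is proposed for manufacturing such a decomposition from the hypotheses on $X$ (some divisibility-type assumption appears to be needed; note also that the paper reads $nU$ as $\{nu:u\in U\}$ --- its proof extracts $y\in U$ with $x=ny$ --- under which ``every singleton bounded'' is strictly stronger than ``no proper open subgroup'', so even the equivalence you lean on needs care). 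In summary: for the part of the statement the paper actually proves, your argument is correct and follows the same route; the remaining part is a genuine unfilled gap in your proposal, though in fairness the paper leaves it unproved as well.
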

\begin{proof}
First, we prove for ${\sf
Hom_{nr}}(X)$. Suppose $(T_{\alpha})$ is a net of $br$-bounded homomorphisms which converges to homomorphisms $T$ and $S$ uniformly on some zero neighborhood $U\subseteq X$. We must show that $T=S$. Assume that $W$ is an arbitrary zero neighborhood. There is a zero neighborhood $V$ with $V+V\subseteq W$. There exists an $\alpha_0$ such that $(T_{\alpha}-T)(U)\subseteq V$ and $(T_{\alpha}-S)(U)\subseteq V$ for each $\alpha\ge\alpha_0$.
Thus,
\[(T-S)(U)\subseteq (T_{\alpha}-T)(U)+(T_{\alpha}-S)(U)\subseteq V+V\subseteq W.\]
So, for each $x\in U$, we have $(T-S)(x)\in W$. Since $X$ is Hausdorff, we see that $T(x)=S(x)$. Now, for any $x\in X$, there is a positive integer $n$ with $x\in nU$. This means that there is a $y\in U$ with $x=ny$. So, by the previous procedure, we conclude that $T(x)=S(x)$, as claimed.

Now, we show that ${\sf
Hom_{br}}(X)$ is also Hausdorff. Observe that every singleton in a topological ring is bounded. Suppose $(T_{\alpha})$ is a net in ${\sf
Hom_{br}}(X)$ which is convergent to homomorphisms $T$ and $S$. Fix any $x\in X$. Assume that $W$ is an arbitrary zero neighborhood and choose zero neighborhood $V$ with $V+V\subseteq W$. There exists an $\alpha_0$ with $(T_{\alpha}-T)(x)\in V$ and $(T_{\alpha}-S)(x)\in V$ for each $\alpha\ge\alpha_0$. Thus,
\[(T-S)(x)=(T_{\alpha}-T)(x)+(T_{\alpha}-S)(x)\in V+V\subseteq W,\]
as desired.

Finally, we show that ${\sf
Hom_{cr}}(X)$ is Hausdorff. Suppose $(T_{\alpha})$ is a net in ${\sf
Hom_{cr}}(X)$ which is $cr$-convergent to homomorphisms $T$ and $S$. Choose arbitrary zero neighborhood $W$ and find zero neighborhood $V$ such that $V+V\subseteq W$. Consider zero neighborhood $V_1$ with $V_1V_1\subseteq V$. There is a zero neighborhood $U$ such that for every zero neighborhood $V_0$ we can find an index $\alpha_0$ such that $(T_{\alpha}-T)(U)\subseteq V_0V_1$ and $(T_{\alpha}-S)(U)\subseteq V_0V_1$ for any $\alpha\ge\alpha_0$.
%\[(T-S)(U)\subseteq (T_{\alpha}-T)(U)+(T_{\alpha}-S)(U)\subseteq V_1V_1+V_1V_1\subseteq V+V\subseteq W\].
Fix any $x\in X$. There is $n\in\Bbb N$ with $x\in nU$. Choose zero neighborhood $V_0$ with $nV_0\subseteq V_1$. There is an $\alpha_0$ such that
$(T_{\alpha}-T)(U)\subseteq V_0V_1$ and $(T_{\alpha}-S)(U)\subseteq V_0V_1$ for any $\alpha\ge\alpha_0$. Thus,
\[(T-S)(U)\subseteq (T_{\alpha}-T)(U)+(T_{\alpha}-S)(U)\subseteq V_0V_1+V_0V_1.\]
Find $y\in U$ with $x=ny$. This implies that
\[(T-S)(x)=(T-S)(ny)= (T_{\alpha}-T)(ny)+(T_{\alpha}-S)(ny)\in nV_0V_1+nV_0V_1\subseteq V_1V_1+V_1V_1\subseteq V+V\subseteq W.\]
\end{proof}
%\begin{rem}
%Suppose $G$ is a locally solid $\ell$-group. We have seen in \cite{Z} that ${\sf Hom^{b}_{n}}(G)$, the group of all order bounded {\sf $nb$}-bounded homomorphisms, ${\sf Hom^{b}_{c}}(G)$, the group of all order bounded continuous  homomorphisms and ${\sf Hom^{b}_{b}}(G)$, the group of all order bounded {\sf $bb$}-bounded homomorphisms, are locally solid $\ell$-groups. It can be verified easily that they are in fact locally solid $\ell$-rings where the multiplication is determined via function composition.
%\end{rem}
\begin{rem}
Compatible with homomorphisms on a topological $\ell$-group, not every order bounded group homomorphism between topological $\ell$-rings is bounded and vise versa.

Suppose $X={\Bbb R}^{\Bbb N}$, the ring of all sequences with product topology, coordinate-wise ordering and pointwise multiplication. Consider the identity group homomorphism $I$ on $X$. It is indeed order bounded but not $\sf nr$-bounded ( see \cite[Example 2.1]{Z1}). Moreover, if we replace pointwise multiplication in ${\Bbb R}^{\Bbb N}$ with zero one, then the identity group homomorphism is still order bounded but neither $\sf nr$- nor $ \sf br$- bounded. Suppose $X=\ell_{\infty}$ with the usual norm topology and $Y$ is $\ell_{\infty}$ with the product topology inherited from ${\Bbb R}^{\Bbb N}$; both of them, with coordinate-wise ordering and pointwise multiplication are topological $\ell$-rings. Then the identity group homomorphism from $Y$ into $X$ is order bounded but not continuous, certainly.

\end{rem}
We recall that topology $\tau$ on a topological $\ell$-ring $(X,\tau)$ is Fatou if $X$ has a base of zero neighborhoods which are order closed. Furthermore, observe that  a Birkhoff and Pierce ring ( $f$-ring) is a lattice ordered ring with this property: $a\wedge b=0$ and $c\geq 0$ imply that $ca\wedge b=ac\wedge b=0$.
\begin{lem}\label{20000}
Suppose $X$ is a Dedekind complete locally solid $f$-ring with Fatou topology and ${\sf Hom^{b}_{nr}}(X)$ is the ring of all order bounded $nr$-bounded group homomorphisms.
Then ${\sf Hom^{b}_{nr}}(X)$ is a topological $\ell$-ring.
\end{lem}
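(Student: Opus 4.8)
The plan is to verify, one at a time, the defining features of a topological $\ell$-ring for ${\sf Hom^{b}_{nr}}(X)$. By Theorem \ref{4} the group ${\sf Hom^{b}}(X,X)$ of all order bounded homomorphisms is a Dedekind complete $\ell$-group with $T^{+}(x)=\sup\{Ty:0\le y\le x\}$ for $x\in X_{+}$; by \cite{Z1}, ${\sf Hom_{nr}}(X)$ is a topological ring; and ${\sf Hom^{b}_{nr}}(X)={\sf Hom^{b}}(X,X)\cap{\sf Hom_{nr}}(X)$ is a subring of the latter. So it is enough to establish: (i) ${\sf Hom^{b}_{nr}}(X)$ is a sublattice of ${\sf Hom^{b}}(X,X)$, equivalently $T\mapsto T^{+}$ preserves $nr$-boundedness; (ii) the subspace topology is locally solid; and (iii) the compatibility $|T\circ S|\le|T|\circ|S|$. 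Then (ii) upgrades the group topology to that of a topological $\ell$-group, and (iii) turns it into a topological $\ell$-ring.

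Step (iii) is the routine one and falls out of the Riesz--Kantorovich formula. For $x\in X_{+}$, additivity of $T\circ S$ together with $\{y-z:0\le y,z\le x\}=[-x,x]$ gives $|T\circ S|(x)=\sup\{T(S(w)):|w|\le x\}$, while $|w|\le x$ forces $|S(w)|\le|S|(|w|)\le|S|(x)$ by positivity and monotonicity of $|S|$, hence $T(S(w))\le\sup\{Tz:|z|\le|S|(x)\}=|T|(|S|(x))=(|T|\circ|S|)(x)$; taking the supremum over $w$ finishes this step.

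Step (i) is the core, and I expect it to be the main obstacle. Since ${\sf Hom^{b}}(X,X)$ is already a lattice, $T^{+}$ exists and is order bounded, so one only has to show it is $nr$-bounded whenever $T$ is. Fix a solid zero neighbourhood $U$ of $X$ with $T(U)$ bounded. Because $T^{+}$ is additive and $U$ is solid, $T^{+}(u)=T^{+}(u^{+})-T^{+}(u^{-})$ with $u^{\pm}\in U\cap X_{+}$, so it suffices to bound $T^{+}$ on $U_{+}:=U\cap X_{+}$; and for $x\in U_{+}$, solidity of $U$ forces $[0,x]\subseteq U$, whence $T^{+}(x)=\sup T[0,x]$ is the supremum of a subset of the bounded set $T(U)$. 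The task is therefore to see that such suprema cannot leave a fixed dilation of a prescribed zero neighbourhood, and this is exactly where the Fatou hypothesis (together with Dedekind completeness and the $f$-ring structure) enters: given a zero neighbourhood $W$ one picks a solid, order closed zero neighbourhood $W_{0}\subseteq W$ and $n\in\nn$ with $T(U)\subseteq nW_{0}$, writes $T^{+}(x)=\sup T[0,x]$ as the order limit of the increasing net of finite joins of members of $T[0,x]\subseteq nW_{0}$, confines those finite joins to a dilation $mW_{0}$ with $m$ independent of $x$ using the $\ell$-group identities of \cite[Lemma 4.1]{H} and the Riesz decomposition Lemma \ref{2}, and then invokes order closedness of $mW_{0}$ to conclude $T^{+}(x)\in mW_{0}\subseteq mW$. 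Hence $T^{+}(U)$ is bounded. This uniform supremum-control — that the modulus inherits $nr$-boundedness — is morally the topological-group transcription of the estimates carried out in the proof of \cite[Theorem 1.18]{AB}, and it is the delicate point of the whole argument.

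Step (ii) then follows along the same lines. Using the Fatou property one works with a base at the identity of ${\sf Hom^{b}_{nr}}(X)$ consisting of sets of the form $\mathcal{M}(U,V)=\{T\in{\sf Hom^{b}_{nr}}(X):T(U)\subseteq V\}$, with $U$ solid and $V$ solid and order closed. To check $\mathcal{M}(U,V)$ is solid, suppose $|T|\le|S|$ with $S\in\mathcal{M}(U,V)$; by Step (i), $T$ is again order bounded and $nr$-bounded, and for $u\in U$ one has $|T(u)|\le|T|(|u|)\le|S|(|u|)$; since $[0,|u|]\subseteq U$, the supremum-control of Step (i) together with order closedness of $V$ keeps $|S|(|u|)$ inside $V$, so solidity of $V$ yields $T(u)\in V$ and thus $T\in\mathcal{M}(U,V)$. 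This exhibits a solid base of zero neighbourhoods, so the topology is locally solid; combined with Steps (i) and (iii) and the ring topology of \cite{Z1}, this shows ${\sf Hom^{b}_{nr}}(X)$ is a topological $\ell$-ring.
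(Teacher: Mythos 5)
There is a genuine gap, and it sits exactly where you yourself flagged the ``delicate point'': Step (i). In this paper, boundedness of a subset $B$ of a topological ring --- and hence $nr$-boundedness of a homomorphism --- is the \emph{multiplicative} notion: for every zero neighbourhood $W$ there must exist a zero neighbourhood $V$ with $VB\subseteq W$ and $BV\subseteq W$; the additive notion $B\subseteq nW_{0}$ is only used later as a mild side assumption on singletons. So what has to be proved is that for every $W$ there is a $V$ with $VT^{+}(U)\subseteq W$ and $T^{+}(U)V\subseteq W$, and your argument never addresses this: you work with $T(U)\subseteq nW_{0}$ and dilations $mW_{0}$, i.e.\ with the wrong notion of boundedness. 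This also explains why the $f$-ring hypothesis does no actual work in your proof, whereas it is the engine of the paper's: the paper picks $V$ with $VT(U)\subseteq W$, notes that $yT(u)\in W$ for $y\in V_{+}$ and $0\le u\le x\in U_{+}$ (solidness of $U$), and then uses the $f$-ring identity of \cite[Theorem 3.15]{J} --- multiplication by a positive element commutes with the lattice operations --- together with order closedness of $W$ and solidness of $U,V$ to conclude $yT^{+}(x)\in W$, hence $VT^{+}(U)\subseteq W$. Without a multiplicative step of this kind the statement ``$T^{+}$ is $nr$-bounded'' in the paper's sense is never reached. Moreover, even inside your additive framework the pivotal claim is unsubstantiated and false in general: finite joins of elements of a solid set need not stay in a dilation with exponent independent of how many elements are joined (in $\rr^{k}$ with the solid $\ell_{1}$-ball as $W_{0}$, the join of the $k$ coordinate unit vectors needs the $k$-fold dilation), so ``confines those finite joins to $mW_{0}$ with $m$ independent of $x$'' cannot be obtained just from Lemma \ref{2} and \cite[Lemma 4.1]{H}.

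Two further remarks. The lemma only asks for a topological $\ell$-ring, i.e.\ that $T^{+}$ stays in the class and that the lattice operations are continuous for the uniform-convergence topology; the paper does the latter via the estimate $T_{\alpha}^{+}(x)-T^{+}(x)\leq (T_{\alpha}-T)^{+}(x)\in W$ for $x\in U_{+}$, using solidness of $U$ and order closedness of $W$. Your Step (ii), which aims directly at a solid base of zero neighbourhoods, is really the content of the subsequent Theorem \ref{601}, and in any case it leans on the same unproved ``supremum-control'' from Step (i). Your Step (iii), the inequality $|T\circ S|\le |T|\circ |S|$, is correct and is a point the paper leaves implicit, but it does not repair the central defect above.
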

\begin{proof}
It suffices to prove that for a homomorphism $T\in {\sf Hom^{b}_{nr}}(X)$, $T^{+} \in {\sf Hom^{b}_{nr}}(X)$. By Theorem \ref{4}, for each positive $x\in X$, we have
\[T^{+}(x)=\sup \{T(u):  0\leq u\leq x\}.\]
Choose a zero neighborhood $U\subseteq X$ such that $T(U)$ is bounded. So, for arbitrary neighborhood $W$, there is a zero neighborhood $V$ with $VT(U)\subseteq W$. Therefore, for each $x\in U_{+}$ and for each $y\in V_{+}$, $yT(x)\in W$, so that using \cite[Theorem 3.15]{J}, solidness of zero neighborhoods $U,V$, and order closedness of $W$, yields that $T^{+}(U)$ is also bounded.
%Let $T\in {\sf Hom^{b}_{nr}}(X)$ and $x\in X_{+}$. By Theorem \ref{4}, we have
 %\[T^{+}(x)=\sup\{T(u):  0\leq u\leq x\}.\]
   Now, we show that the lattice operations are continuous. Suppose $(T_{\alpha})$  is a  net of order bounded $nr$-bounded group homomorphisms that  converges uniformly on some  zero neighborhood $U\subseteq X$ to homomorphism $T$ in ${\sf Hom^{b}_{nr}}(X)$. Choose arbitrary neighborhood $W\subseteq X$. Fix $x\in U_{+}$. Now, consider the following lattice inequality:
 \[\sup\{T_{\alpha}(u): 0\leq u\leq x\}-\sup\{T(u): 0\leq u\leq x\}\]
 \[\le\sup\{(T_{\alpha}-T)(u): 0\leq u\leq x\}.\]
 There exists an $\alpha_0$ such that $(T_{\alpha}-T)(U)\subseteq W$ for each $\alpha\geq\alpha_0$. Therefore, using the order closedness of neighborhood $W$ and solidness of neighborhood $U$, we have
\[{T_{\alpha}}^{+}(x)-{T}^{+}(x)\leq({T_{\alpha}-T})^{+}(x)\in W.\]
\end{proof}
%\begin{rem}\label{5001}
%It is proved in \cite[Theorem 4.1]{H} that locally solidness of a topological $\ell$-group $G$ is equivalent to uniform continuity of the most lattice operations of $G$. Now, consider topological $\ell$-ring $X$. Zero neighborhoods of $X$ and locally solidness of them lay on group and lattice structures of $X$ so that we can have a similar statement for locally solidness of zero neighborhoods in $X$. Moreover, note that by \cite[Remark 2]{H}, uniform continuity of the modulus is not an equivalent condition for locally solidness of a topological $\ell$-group. This is an vital difference between Riesz spaces and $\ell$-groups.
%\end{rem}
\begin{thm}\label{601}
Suppose $X$ is a Dedekind complete locally solid $f$-ring with Fatou topology. Then ${\sf Hom^{b}_{nr}}(X)$ is a locally solid $\ell$-ring with respect to the uniform convergence topology on some zero neighborhood.
\end{thm}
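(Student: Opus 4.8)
The plan is to build on Lemma \ref{20000}: under the present hypotheses it already gives that ${\sf Hom^{b}_{nr}}(X)$ is a topological $\ell$-ring, so the only thing left is to check that its topology --- uniform convergence on some zero neighborhood --- is \emph{locally solid}, i.e.\ that $0$ admits a base of solid neighborhoods. Recall that this topology is controlled by the sets
\[\mathcal N(U,W)=\{\,T\in{\sf Hom^{b}_{nr}}(X):T(U)\subseteq W\,\},\]
with $U,W$ ranging over zero neighborhoods of $X$. Since finite intersections of solid sets are again solid, it suffices to show that $\mathcal N(U,W)$ is solid in ${\sf Hom^{b}_{nr}}(X)$ whenever $U$ is chosen solid and $W$ solid and order closed; such pairs still determine the topology because $X$ is locally solid with a Fatou topology.

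So fix such $U,W$, take $T\in\mathcal N(U,W)$, and suppose $S\in{\sf Hom^{b}_{nr}}(X)$ satisfies $|S|\le|T|$ in the lattice ${\sf Hom^{b}}(X)$ (recall from Theorem \ref{4} and Lemma \ref{20000} that this is an $\ell$-ring and that $T^{+},(-T)^{+},|T|$ all remain in ${\sf Hom^{b}_{nr}}(X)$). I want $S(U)\subseteq W$. Fix $x\in U$; solidness of $U$ gives $|x|\in U$, and moreover every $z\in X$ with $|z|\le|x|$ lies in $U$, so $T(z)\in W$ for all such $z$. Combining the Riesz--Kantorovich formulas of Theorem \ref{4} for $T^{+}$ and $(-T)^{+}$ (using that in a Dedekind complete $\ell$-group $\sup A+\sup B=\sup(A+B)$),
\[|T|(|x|)=T^{+}(|x|)+(-T)^{+}(|x|)=\sup\{\,T(z):|z|\le|x|\,\}\in W,\]
the membership in $W$ coming from the order-closedness of $W$, applied --- exactly as in the proof of Lemma \ref{20000} --- to the increasing net of finite joins of $\{T(z):|z|\le|x|\}$. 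Then, since $|S(x)|\le|S|(|x|)\le|T|(|x|)$ and $|T|(|x|)\in W$, solidness of $W$ yields $S(x)\in W$. As $x\in U$ was arbitrary, $S\in\mathcal N(U,W)$, which is therefore solid; hence the topology is locally solid and, together with Lemma \ref{20000}, ${\sf Hom^{b}_{nr}}(X)$ is a locally solid $\ell$-ring.

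I expect the delicate point --- the one that must be argued rather than quoted --- to be the claim that the supremum $|T|(|x|)=\sup\{T(z):|z|\le|x|\}$ of a family contained in $W$ is again in $W$: this is not a formal consequence of order-closedness, since a solid order-closed set need not be a sublattice. One resolves it as in Lemma \ref{20000}, realizing $|T|(|x|)$ as the order limit of a net genuinely sitting in $W$ (the increasing net of finite suprema, or the two-sided decomposition through $T^{\pm}$), and this is precisely where the Dedekind completeness of $X$, the Fatou property, and the $f$-ring axiom are used. The remaining ingredients --- that the sets $\mathcal N(U,W)$ generate the topology, that $|S|\le|T|$ keeps $S$ inside ${\sf Hom^{b}_{nr}}(X)$ (via the $\ell$-ring inequality $|ab|\le|a|\,|b|$ and solidness, to transport $nr$-boundedness from $|T|$ to $S$), and the pointwise estimate $|S(x)|\le|S|(|x|)$ --- are routine.
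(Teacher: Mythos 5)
Your argument is essentially correct, but it takes a genuinely different route from the paper's. The paper never verifies solidness of any concrete neighborhood: it proves that the lattice operation $T\mapsto T^{+}$ is \emph{uniformly continuous} for the topology of uniform convergence on a zero neighborhood (via the inequality ${T_{\alpha}}^{+}(x)-{S_{\alpha}}^{+}(x)\leq(T_{\alpha}-S_{\alpha})^{+}(x)$, with $(T_{\alpha}-S_{\alpha})^{+}(x)\in W$ justified by solidness of $U$ and order closedness of $W$), and then invokes \cite[Theorem 4.1]{H} --- the characterization of locally solid $\ell$-group topologies by uniform continuity of the lattice operations --- together with Lemma \ref{20000}. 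You instead check the definition directly, showing that the sets $\mathcal{N}(U,W)$ with $U$ solid and $W$ solid order closed are solid subsets of ${\sf Hom^{b}_{nr}}(X)$; this is more concrete (it exhibits a solid base), but it leans on the unstated claim that these sets form a base at zero for the ``uniform convergence on some zero neighborhood'' topology, a point the paper's definition of that topology leaves informal, and it uses the Riesz--Kantorovich formula for $|T|$ rather than only for $T^{+}$ (an easy consequence of Theorem \ref{4}). The delicate membership $|T|(|x|)\in W$ that you flag is exactly the step the paper itself asserts (in Lemma \ref{20000} and again here: $R^{+}(x)\in W$ whenever $R(U)\subseteq W$ and $x\in U_{+}$), so you are at the same level of rigor as the paper; note, however, that your proposed justification via the increasing net of finite joins of $\{T(z):|z|\leq|x|\}$ does not work as stated, since a solid order-closed set need not contain finite joins of its elements. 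The cleaner reduction is the two-sided one you also mention: choose a solid order-closed $W'$ with $W'+W'\subseteq W$, apply the Lemma \ref{20000} step to $T$ and $-T$ to get $T^{+}(|x|),(-T)^{+}(|x|)\in W'$, and conclude $|T|(|x|)=T^{+}(|x|)+(-T)^{+}(|x|)\in W$; with that bookkeeping your proof goes through and gives the same conclusion as the paper's.
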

\begin{proof}
In the view of Lemma \ref{20000} and \cite[Theorem 4.1]{H}, it is sufficient to show that the lattice operation $T\to T^+$ is uniformly continuous in  ${\sf Hom^{b}_{nr}}(X)$. Let $T\in {\sf Hom^{b}_{nr}}(X)$ and $x\in X_{+}$. By Theorem \ref{4}, we have
 \[T^{+}(x)=\sup\{T(u):  0\leq u\leq x\}.\]
   Now, suppose $(T_{\alpha})$ and $(S_{\alpha})$ are nets of order bounded $nr$-bounded group homomorphisms that $(T_{\alpha}-S_{\alpha})$ converges uniformly on some  zero neighborhood $U\subseteq X$ to zero. Choose arbitrary neighborhood $W\subseteq X$. Fix $x\in U_{+}$. Now, consider the following lattice inequality:
 \[\sup\{T_{\alpha}(u): 0\leq u\leq x\}-\sup\{S_{\alpha}(u): 0\leq u\leq x\}\]
 \[\le\sup\{(T_{\alpha}-S_{\alpha})(u): 0\leq u\leq x\}.\]
 There exists an $\alpha_0$ such that $(T_{\alpha}-S_{\alpha})(U)\subseteq W$ for each $\alpha\geq\alpha_0$. Therefore, using the order closedness of neighborhood $W$ and solidness of neighborhood $U$, we have
\[{T_{\alpha}}^{+}(x)-{S_{\alpha}}^{+}(x)\leq({T_{\alpha}-S_{\alpha}})^{+}(x)\in W.\]

Now, using \cite[Theorem 4.1]{H}, yields the desired result.
\end{proof}
The following lemma may be known; to our best knowledge, we could not find any proof for it; we present a proof for the sake of completeness.
\begin{lem}\label{7000}
Suppose $X$ is a locally solid $f$-ring. Then, the solid hull of a bounded set is also bounded.
\end{lem}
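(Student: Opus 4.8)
The plan is to reduce the boundedness of the solid hull to the boundedness of the original set by controlling, for a fixed zero neighborhood, how multiplication interacts with the order interval generated by a bounded set. Recall that the solid hull of a set $B$ is $\mathrm{sol}(B)=\{x\in X: |x|\le |b|\text{ for some }b\in B\}$. Given a zero neighborhood $W$, I first pick a solid zero neighborhood $W_0\subseteq W$ (possible since $X$ is locally solid), and then choose a zero neighborhood $V$ with $VB\subseteq W_0$ and $BV\subseteq W_0$ using boundedness of $B$; I may also shrink $V$ so that it is solid. The goal is to show $V\,\mathrm{sol}(B)\subseteq W$ and $\mathrm{sol}(B)\,V\subseteq W$.

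The key step is the following: if $y\in V$ and $x\in\mathrm{sol}(B)$, say $|x|\le |b|$ with $b\in B$, then using the $f$-ring compatibility and the inequality $|xy|\le |x|\cdot|y|\le |b|\cdot|y|=|by|$ (here I use $|x|\cdot|y|\le |b|\cdot|y|$, which holds in an $\ell$-ring since $0\le |y|$ and multiplication by a positive element is order preserving — this is exactly where the $f$-ring / lattice-ring axioms enter). Since $by\in BV\subseteq W_0$ and also $-by=(-b)y\in BV\subseteq W_0$, and $W_0$ is solid, we get $|by|\in W_0$ hence $|by|$ and therefore $xy$ (because $|xy|\le |by|$ and solidness again) lies in $W_0\subseteq W$. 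The same argument on the other side gives $yx\in W$. Thus $V\,\mathrm{sol}(B)\subseteq W$ and $\mathrm{sol}(B)\,V\subseteq W$, which is precisely the definition of $\mathrm{sol}(B)$ being bounded.

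The main obstacle — and the point that deserves care — is justifying $|x|\cdot|y|\le |b|\cdot|y|$ and, more basically, that $W_0$ solid together with $b y,\,(-b)y\in W_0$ yields $|by|\in W_0$: one needs $|by|\le |by|$ trivially but then must observe $by$ and $-by$ being in the solid set $W_0$ forces everything of modulus at most $|by|$ into $W_0$; alternatively one argues directly that $|by|=|b||y|$ may fail but $|by|\le |b||y|$ suffices. I would also double-check that $W_0$ being merely solid (not order closed) is enough here — it is, since no suprema are involved, only the modulus inequality $|xy|\le |b|\cdot |y|=|by|$ (using $|b||y|=|by|$ is not needed; $|xy|\le|x||y|\le|b||y|$ and then $|b||y|\ge|by|$... actually one wants an \emph{upper} bound living in $W_0$, so the cleaner route is $|xy|\le|x||y|\le|b||y|$ and separately show $|b||y|\in W_0$, using that $b\in B$, $|y|\le$ some element of $V$ by solidness of $V$, and $|b||y|\le$ a product from $B\cdot V\subseteq W_0$). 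Carefully ordering these modulus inequalities so that each intermediate term is manifestly in the solid neighborhood $W_0$ is the one genuinely fiddly part; everything else is the standard unwinding of the definition of boundedness in a topological ring.
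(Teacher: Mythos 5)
Your argument is correct and takes essentially the same route as the paper: fix a solid zero neighborhood, use boundedness of $B$ to get $V$ with $VB\subseteq W_0$ (and $BV\subseteq W_0$), and then push $x\in \mathrm{sol}(B)$ through the chain $|xy|\le |x||y|\le |b||y|$ together with the $f$-ring multiplicativity of the modulus (the paper writes $|z||y|=|zy|$) and solidness of the neighborhood. Your write-up is in fact a bit more careful than the paper's, since you explicitly pass to a solid $W_0\subseteq W$ and verify both $V\,\mathrm{sol}(B)\subseteq W$ and $\mathrm{sol}(B)\,V\subseteq W$, whereas the paper only records the one-sided inclusion; the key computation is the same.
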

\begin{proof}
Suppose $B\subseteq X$ is bounded. Then, by usual definition of a solid hull, we have
\[Sol(B)=\{x\in X, \exists y\in B: |x|\leq |y|\}.\]
Let $W$ be an arbitrary zero neighborhood of $X$. There exists a zero neighborhood $V$ with $VB\subseteq W$. For each $x\in Sol(B)$, there is $y\in B$ such that $|x|\leq |y|$ so that for each $z\in V$, the inequality $|zx|= |z||x|\leq |z||y|=|zy|$ in connection with solidness of zero neighborhood $W$, imply that $V Sol(B)\subseteq W$, as we wanted.
\end{proof}
\begin{lem}\label{30000}
Suppose $X$ is a Dedekind complete locally solid $f$-ring with Fatou topology and ${\sf Hom^{b}_{br}}(X)$ is the ring of all order bounded $br$-bounded group homomorphisms.
Then ${\sf Hom^{b}_{br}}(X)$ is a topological $\ell$-ring.
\end{lem}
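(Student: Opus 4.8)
The plan is to mirror, essentially line for line, the proof of Lemma \ref{20000}, the one structural change being that the single zero neighborhood on which $T$ was controlled there is replaced by an arbitrary bounded set; Lemma \ref{7000} is exactly the tool that makes this replacement work. By Theorem \ref{4} the group ${\sf Hom^{b}}(X)$ of all order bounded homomorphisms on $X$ is already a Dedekind complete $\ell$-group with $T^{+}(x)=\sup\{Tu:0\leq u\leq x\}$ for every $x\in X_{+}$, and since ${\sf Hom^{b}_{br}}(X)={\sf Hom^{b}}(X)\cap{\sf Hom_{br}}(X)$ is a subring of ${\sf Hom}(X)$ under composition, the statement reduces to two points: first, that $T\in{\sf Hom^{b}_{br}}(X)$ implies $T^{+}\in{\sf Hom^{b}_{br}}(X)$, so that ${\sf Hom^{b}_{br}}(X)$ is a sublattice of ${\sf Hom^{b}}(X)$ and, by \cite[Lemma 4.1]{H} as in Lemma \ref{20000}, an $\ell$-ring; and second, that $T\mapsto T^{+}$ is continuous for the topology of uniform convergence on bounded sets.

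For the first point, let $B\subseteq X$ be bounded. By Lemma \ref{7000} its solid hull is again bounded, so we may assume $B$ solid; then $B_{+}=B\cap X_{+}$ is solid too, and $0\leq u\leq x\in B_{+}$ forces $u\in B$. Since $T$ is $br$-bounded, $T(B)$ is bounded, so for an arbitrary zero neighborhood $W$ — which, the topology being Fatou, we take solid and order closed — there is a solid zero neighborhood $V$ with $VT(B)\subseteq W$ and $T(B)V\subseteq W$. For $x\in B_{+}$ and $z\in V_{+}$ one then has $zT(u)\in W$ for every $0\leq u\leq x$; because $X$ is an $f$-ring, multiplication by the positive element $z$ distributes over the supremum (\cite[Theorem 3.15]{J}), so $zT^{+}(x)=\sup\{zT(u):0\leq u\leq x\}$, and order closedness of $W$ gives $zT^{+}(x)\in W$, with $T^{+}(x)z\in W$ by the symmetric argument. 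Passing from $B_{+},V_{+}$ to $B,V$ by way of $|T^{+}(x)|\leq T^{+}(|x|)$ (valid since $T^{+}$ is positive) and the solidness of $W$ and $V$ then yields $VT^{+}(B)\subseteq W$ and $T^{+}(B)V\subseteq W$, so $T^{+}(B)$ is bounded and $T^{+}\in{\sf Hom^{b}_{br}}(X)$.

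For the second point, let $(T_{\alpha})$ be a net in ${\sf Hom^{b}_{br}}(X)$ converging uniformly on bounded sets to $T$, let $B$ be a bounded set (replaced, as above, by its solid hull), and let $W$ be a zero neighborhood. The key estimate is the lattice inequality
\[\sup\{T_{\alpha}(u):0\leq u\leq x\}-\sup\{T(u):0\leq u\leq x\}\leq\sup\{(T_{\alpha}-T)(u):0\leq u\leq x\},\]
valid for $x\in B_{+}$: every $u$ appearing on the right lies in $B$, so once $\alpha$ is large enough (and after shrinking $W$ finitely often to keep it solid and order closed) one has $(T_{\alpha}-T)(u)\in W$ there, and the order closedness and solidness of $W$ propagate this to ${T_{\alpha}}^{+}(x)-{T}^{+}(x)$ and then, via $x=x^{+}-x^{-}$ and the solidness of $B$, to $({T_{\alpha}}^{+}-{T}^{+})(x)$ — exactly as in Lemma \ref{20000}. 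Hence $T_{\alpha}^{+}\to T^{+}$ uniformly on bounded sets, and with \cite[Theorem 4.1]{H} the lattice operations of ${\sf Hom^{b}_{br}}(X)$ are continuous, completing the proof.

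I expect the main obstacle to be the $br$-boundedness of $T^{+}$ in the first point: one must guarantee that the auxiliary elements $u$ with $0\leq u\leq x$ entering the Riesz-Kantorovich formula all stay inside a single bounded set as $x$ ranges over $B$ — which is precisely what Lemma \ref{7000} delivers — and that a positive element of a zero neighborhood can be pulled through the supremum defining $T^{+}(x)$, which is the point where the $f$-ring hypothesis and the order closedness coming from the Fatou property are both indispensable. Once these are in place, the rest is a routine transcription of Lemma \ref{20000} and Theorem \ref{601}.
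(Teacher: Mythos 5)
Your proposal is correct and follows essentially the same route as the paper: reduce to showing $T^{+}$ stays $br$-bounded via the Riesz--Kantorovich formula of Theorem \ref{4}, use Lemma \ref{7000} to replace a bounded set by its solid hull, invoke the $f$-ring property (\cite[Theorem 3.15]{J}) together with order closedness of Fatou neighborhoods, and then establish continuity of $T\mapsto T^{+}$ through the same lattice inequality. The extra details you supply (handling $T^{+}(B)V$ as well and passing from $B_{+}$ to $B$ via $|T^{+}(x)|\leq T^{+}(|x|)$ and solidness) only make explicit steps the paper leaves implicit.
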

\begin{proof}
It suffices to prove that for a homomorphism $T\in {\sf Hom^{b}_{br}}(X)$, $T^{+} \in {\sf Hom^{b}_{br}}(X)$. By Theorem \ref{4}, we have
\[T^{+}(x)=\sup \{T(u):  0\leq u\leq x\}.\]
Fix a bounded set $B\subseteq X$. Without loss of generality, we may assume that $B$ is also solid; otherwise consider the solid hull of $B$ which is by Lemma \ref{7000}, bounded. So, for arbitrary neighborhood $W$, there is a zero neighborhood $V$ with $VT(B)\subseteq W$. Therefore, for each $x\in B_{+}$ and for each $y\in V_{+}$, $yT(x)\in W$, so that using \cite[Theorem 3.15]{J}, solidness of zero neighborhood $V$ and bounded set $B$, and order closedness of $W$, we see that $T^{+}(B)$ is also bounded.

Now, we show that the lattice operations are continuous.
Suppose $(T_{\alpha})$ is a  net of order bounded $br$-bounded group homomorphisms that converges uniformly on bounded sets to the homomorphism $T$ in ${\sf Hom^{b}_{br}}(X)$. Fix bounded set $B\subseteq X$. Choose arbitrary neighborhood $W\subseteq X$. Fix $x\in B_{+}$. By Lemma \ref{7000}, $B$ can be considered solid. Now, observe the following lattice inequality:
 \[\sup\{T_{\alpha}(u): 0\leq u\leq x\}-\sup\{T(u): 0\leq u\leq x\}\]
 \[\le\sup\{(T_{\alpha}-T)(u): 0\leq u\leq x\}.\]
 There exists an $\alpha_0$ such that $(T_{\alpha}-T)(B)\subseteq W$ for each $\alpha\geq\alpha_0$. Therefore, using the order closedness of neighborhood $W$ and solidness of bounded set $B$, we have
\[{T_{\alpha}}^{+}(x)-{T}^{+}(x)\leq({T_{\alpha}-T})^{+}(x)\in W.\]

\end{proof}
%\begin{rem}\label{5000}
%It is proved in \cite[Theorem 4.1]{H} that locally solidness of a topological $\ell$-group $G$ is equivalent to uniform continuity of the most lattice operations of $G$. Now, consider topological $\ell$-ring $X$. Zero neighborhoods of $X$ and locally solidness of them lay on group and lattice structures of $X$ so that we can have a similar statement for locally solidness of zero neighborhoods in $X$.
%\end{rem}
\begin{thm}\label{600}
Suppose $X$ is a Dedekind complete locally solid $f$-ring with Fatou topology. Then ${\sf Hom^{b}_{br}}(X)$ is a locally solid $\ell$-ring with respect to the uniform convergence topology on bounded sets.
\end{thm}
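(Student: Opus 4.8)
The plan is to follow verbatim the template used for Theorem~\ref{601}, simply replacing the role of a fixed zero neighborhood by that of an arbitrary bounded set and invoking Lemma~\ref{30000} in place of Lemma~\ref{20000}. By Lemma~\ref{30000} we already know that ${\sf Hom^{b}_{br}}(X)$ is a topological $\ell$-ring, so by \cite[Theorem 4.1]{H} the only thing left to verify is that the lattice operation $T\mapsto T^{+}$ is uniformly continuous for the topology of uniform convergence on bounded sets.

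To do this I would take nets $(T_{\alpha})$ and $(S_{\alpha})$ in ${\sf Hom^{b}_{br}}(X)$ with $(T_{\alpha}-S_{\alpha})$ converging to zero uniformly on bounded sets, fix a bounded set $B\subseteq X$ and an arbitrary zero neighborhood $W$; since the topology is locally solid and Fatou I may take $W$ solid and order closed, and by Lemma~\ref{7000} I may take $B$ solid. For $x\in B_{+}$, solidness of $B$ gives $u\in B$ whenever $0\le u\le x$; combining this with the Riesz--Kantorovich description of $T^{+}$ from Theorem~\ref{4} and the elementary lattice inequality
\[{T_{\alpha}}^{+}(x)-{S_{\alpha}}^{+}(x)\le ({T_{\alpha}-S_{\alpha}})^{+}(x)=\sup\{(T_{\alpha}-S_{\alpha})(u):0\le u\le x\},\]
and choosing $\alpha_{0}$ with $(T_{\alpha}-S_{\alpha})(B)\subseteq W$ for all $\alpha\ge\alpha_{0}$, the order closedness of $W$ together with the $f$-ring calculus of \cite[Theorem 3.15]{J} (exactly as in Lemma~\ref{30000}) forces the supremum on the right to lie in $W$. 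Hence ${T_{\alpha}}^{+}(x)-{S_{\alpha}}^{+}(x)\in W$ for every $x\in B_{+}$ and every $\alpha\ge\alpha_{0}$; writing a general $x\in B$ as $x=x^{+}-x^{-}$ with $x^{\pm}$ still controlled by $B$ (again by solidness of $B$) and recalling the extension formula $T^{+}(x)=T^{+}(x^{+})-T^{+}(x^{-})$, one upgrades this to $({T_{\alpha}}^{+}-{S_{\alpha}}^{+})(B)$ being absorbed by $W$. This is precisely uniform continuity of $T\mapsto T^{+}$ on bounded sets, and \cite[Theorem 4.1]{H} then gives that ${\sf Hom^{b}_{br}}(X)$ is a locally solid $\ell$-ring.

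I expect the main obstacle to be the step certifying that $\sup\{(T_{\alpha}-S_{\alpha})(u):0\le u\le x\}\in W$: although each individual value lies in $W$ once $B$ is solid, the supremum itself is only the order limit of the increasing net of finite sub-suprema, so one genuinely needs $W$ to be solid \emph{and} order closed (so that finite suprema of its positive elements remain inside and the order limit is captured), and one needs Dedekind completeness of $X$ for these suprema to exist at all; this is the same delicate point already negotiated in Lemma~\ref{30000}, and the $f$-ring hypothesis enters exactly here through \cite[Theorem 3.15]{J}. The secondary point, passing from $B_{+}$ to all of $B$, is routine because $B$ is solid and the extension $T^{+}(x)=T^{+}(x^{+})-T^{+}(x^{-})$ is additive in the two positive pieces.
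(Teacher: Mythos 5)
Your proposal follows essentially the same route as the paper: reduce via Lemma~\ref{30000} and \cite[Theorem 4.1]{H} to uniform continuity of $T\mapsto T^{+}$, replace $B$ by its solid hull using Lemma~\ref{7000}, and control ${T_{\alpha}}^{+}(x)-{S_{\alpha}}^{+}(x)$ by $({T_{\alpha}-S_{\alpha}})^{+}(x)$ via the Riesz--Kantorovich formula, order closedness of $W$, and solidness of $B$. Your extra remarks (passing from $B_{+}$ to all of $B$, and why the supremum stays in $W$) only make explicit steps the paper leaves implicit, so this is the same argument.
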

\begin{proof}
In the view of Lemma \ref{30000} and \cite[Theorem 4.1]{H}, it is sufficient to show that the lattice operation $T\to T^+$ is uniformly continuous in  ${\sf Hom^{b}_{br}}(X)$. Let $T\in {\sf Hom^{b}_{br}}(X)$ and $x\in X_{+}$. By Theorem \ref{4}, we have
 \[T^{+}(x)=\sup\{T(u):  0\leq u\leq x\}.\]
   Now, suppose $(T_{\alpha})$ and $(S_{\alpha})$ are  nets of order bounded $br$-bounded group homomorphisms that $(T_{\alpha}-S_{\alpha})$ converges uniformly on bounded sets to zero. Fix bounded set $B\subseteq X$. Choose arbitrary neighborhood $W\subseteq X$. Fix $x\in B_{+}$. By Lemma \ref{7000}, $B$ can be considered solid. Now, observe the following lattice inequality:
 \[\sup\{T_{\alpha}(u): 0\leq u\leq x\}-\sup\{S_{\alpha}(u): 0\leq u\leq x\}\]
 \[\le\sup\{(T_{\alpha}-S_{\alpha})(u): 0\leq u\leq x\}.\]
 There exists an $\alpha_0$ such that $(T_{\alpha}-S_{\alpha})(B)\subseteq W$ for each $\alpha\geq\alpha_0$. Therefore, using the order closedness of neighborhood $W$ and solidness of bounded set $B$, we have
\[{T_{\alpha}}^{+}(x)-{S_{\alpha}}^{+}(x)\leq({T_{\alpha}-S_{\alpha}})^{+}(x)\in W.\]

\end{proof}

\begin{lem}\label{40000}
Suppose $X$ is a Dedekind complete locally solid $f$-ring with Fatou topology and ${\sf Hom^{b}_{cr}}(X)$ is the ring of all order bounded continuous group homomorphisms.
Then ${\sf Hom^{b}_{cr}}(X)$ is a topological $\ell$-ring.
\end{lem}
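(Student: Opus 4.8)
For the final statement, \emph{Lemma \ref{40000}}, I would proceed as follows.

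The plan is to reproduce the two-step scheme of Lemmas \ref{20000} and \ref{30000}, the only new ingredient being the more delicate topology of ${\sf cr}$-convergence carried by ${\sf Hom^{b}_{cr}}(X)$. Concretely I must check: (i) if $T\in{\sf Hom^{b}_{cr}}(X)$ then $T^{+}=T\vee 0$ again belongs to ${\sf Hom^{b}_{cr}}(X)$; and (ii) the lattice operation $T\mapsto T^{+}$ is continuous for ${\sf cr}$-convergence. Granting these, ${\sf Hom^{b}_{cr}}(X)$ is a sub-$\ell$-group of the $\ell$-group ${\sf Hom^{b}}(X)$ of Theorem \ref{4} by (i) together with the identities of \cite[Lemma 4.1]{H} (using $T\vee S=S+(T-S)^{+}$ etc.), it is a topological ring as a subring of ${\sf Hom_{cr}}(X)$ by \cite{Z1}, and the inequality $|T\cdot S|\le|T|\cdot|S|$ passes from $X$ to homomorphisms pointwise, so with (ii) it is a topological $\ell$-ring. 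Throughout I use that, $X$ being Dedekind complete, Theorem \ref{4} gives $T^{+}(x)=\sup\{T(u):0\le u\le x\}$ for $x\in X_{+}$, that the Fatou property supplies a base of solid, order closed zero neighborhoods, and the $f$-ring identity \cite[Theorem 3.15]{J} already invoked in Lemma \ref{20000}.

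For (i) I would argue as in the first half of Lemma \ref{20000}, reading ``continuous'' for ``${\sf nr}$-bounded''. Fix a solid, order closed zero neighborhood $W\subseteq X$ and, by continuity of $T$, a solid zero neighborhood $U$ with $T(U)\subseteq W$. For $x\in U$ solidness of $U$ gives $[0,|x|]\subseteq U$, hence $T[0,|x|]\subseteq W$; since $T^{+}(|x|)=\sup T[0,|x|]$ and $W$ is order closed, the reasoning of Lemma \ref{20000} yields $T^{+}(|x|)\in W$, and then $|T^{+}(x)|\le T^{+}(|x|)$ (valid since $T^{+}$ is a positive homomorphism) together with solidness of $W$ gives $T^{+}(x)\in W$. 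Thus $T^{+}(U)\subseteq W$, so $T^{+}$ is continuous, and it is order bounded by Theorem \ref{4}; hence $T^{+}\in{\sf Hom^{b}_{cr}}(X)$.

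For (ii), let $(T_{\alpha})$ be a net in ${\sf Hom^{b}_{cr}}(X)$ that ${\sf cr}$-converges to $T$; the goal is that $(T_{\alpha}^{+})$ ${\sf cr}$-converges to $T^{+}$. The driving pointwise estimate is the $\ell$-group inequality $T_{\alpha}^{+}-T^{+}\le(T_{\alpha}-T)^{+}$ in ${\sf Hom^{b}}(X)$, which with its symmetric counterpart yields $|T_{\alpha}^{+}-T^{+}|\le|T_{\alpha}-T|$, and hence (using Lemma \ref{2} for the modulus formula) $|(T_{\alpha}^{+}-T^{+})(x)|\le|T_{\alpha}-T|(|x|)=\sup\{(T_{\alpha}-T)(w):|w|\le|x|\}$. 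Given a zero neighborhood $W$, pass by the Fatou property to a solid, order closed $W_{0}\subseteq W$, and apply the definition of ${\sf cr}$-convergence to $(T_{\alpha})$ and $W_{0}$: this produces a zero neighborhood $U$, which we may take solid (any smaller one still works and $X$ is locally solid), such that for every zero neighborhood $V$ there is $\alpha_{0}$ with $(T_{\alpha}-T)(U)\subseteq VW_{0}$ for $\alpha\ge\alpha_{0}$. Now fix an arbitrary zero neighborhood $V$, choose a solid, order closed $V_{0}\subseteq V$, and take $\alpha_{0}$ with $(T_{\alpha}-T)(U)\subseteq V_{0}W_{0}$ for $\alpha\ge\alpha_{0}$. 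For $x\in U$, solidness of $U$ forces $(T_{\alpha}-T)(w)\in V_{0}W_{0}$ for every $w$ with $|w|\le|x|$; feeding this into the estimate above and invoking order closedness and solidness exactly as in Lemma \ref{20000} (together with \cite[Theorem 3.15]{J} and, if needed, Lemma \ref{7000} to keep the product set $V_{0}W_{0}$ inside $VW$) gives $(T_{\alpha}^{+}-T^{+})(U)\subseteq VW$ for $\alpha\ge\alpha_{0}$. Since $V$ was arbitrary and $U$ was chosen before $V$, this is precisely ${\sf cr}$-convergence of $(T_{\alpha}^{+})$ to $T^{+}$; continuity of $\wedge$ and $\vee$ then follows formally.

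I expect the main obstacle to be the quantifier discipline in (ii): the definition of ${\sf cr}$-convergence demands that the neighborhood $U$ be produced from $W$ alone, before the auxiliary neighborhood $V$ is revealed, so one cannot let $U$ depend on $V$; one must then push the product set $V_{0}W_{0}$ — a priori neither solid nor order closed — back inside $VW$ while still being entitled to use order closedness of the base neighborhoods. This is exactly where the $f$-ring hypothesis, \cite[Theorem 3.15]{J}, and Lemma \ref{7000} (solid hull of a bounded set is bounded) do the real work; the lattice inequalities for positive parts and the solidness/Fatou reductions are routine and entirely parallel to Lemmas \ref{20000} and \ref{30000}.
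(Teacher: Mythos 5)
Your proposal follows essentially the same route as the paper's proof: the same use of Theorem \ref{4} together with solidness of $U$ and order closedness of the (Fatou) neighborhood $W$ to show $T^{+}$ stays continuous, and the same lattice inequality $T_{\alpha}^{+}(x)-T^{+}(x)\le (T_{\alpha}-T)^{+}(x)$ fed into the ${\sf cr}$-estimate $(T_{\alpha}-T)(U)\subseteq VW$ to get continuity of $T\mapsto T^{+}$. You are in fact slightly more explicit than the paper about the quantifier discipline (producing $U$ from $W$ alone before $V$ appears) and about the product set $VW$ not being obviously order closed, a point the paper's own proof passes over with the same brief appeal to solidness, order closedness and \cite[Theorem 3.15]{J} that you make.
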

\begin{proof}
It suffices to prove that for a homomorphism $T\in {\sf Hom^{b}_{cr}}(X)$, $T^{+} \in {\sf Hom^{b}_{cr}}(X)$. By Theorem \ref{4}, for any $x\in X_{+}$, we have
\[T^{+}(x)=\sup \{T(u):  0\leq u\leq x\}.\]
Choose arbitrary zero neighborhood $W$. There exists a zero neighborhood $U$ with $T(U)\subseteq W$. Therefore, for each $x\in U_{+}$, $T(x)\in  W$, so that $T^{+}(x)\in  W$ using solidness of $U$ and order closedness of $W$. Thus, we see that $T^{+}(U)\subseteq W$.

Now, we show that the lattice operations are continuous.

Suppose $(T_{\alpha})$ is a  net of order bounded continuous group homomorphisms that $cr$-converges to the homomorphism $T$ in ${\sf Hom^{b}_{cr}}(X)$. Choose arbitrary neighborhood $W\subseteq X$. There is a zero neighborhood $U\subseteq X$ such that for each zero neighborhood $V\subseteq X$ there exists an $\alpha_0$ with $(T_{\alpha}-T)(U)\subseteq VW$ for each $\alpha\geq\alpha_0$. Now, consider the following lattice inequality:
 \[\sup\{T_{\alpha}(u): 0\leq u\leq x\}-\sup\{T(u): 0\leq u\leq x\}\]
 \[\le\sup\{(T_{\alpha}-T)(u): 0\leq u\leq x\}.\]
 Therefore, using the order closedness of neighborhoods $V,W$ and solidness of zero neighborhood $U$, we have
\[{ T_{\alpha}}^{+}(x)-{T}^{+}(x)\leq ({T_{\alpha}-T})^{+}(x)\in VW.\]
This would complete the proof.
\end{proof}
%\begin{rem}\label{5000}
%It is proved in \cite[Theorem 4.1]{H} that locally solidness of a topological $\ell$-group $G$ is equivalent to uniform continuity of the most lattice operations of $G$. Now, consider topological $\ell$-ring $X$. Zero neighborhoods of $X$ and locally solidness of them lay on group and lattice structures of $X$ so that we can have a similar statement for locally solidness of zero neighborhoods in $X$.
%\end{rem}
\begin{thm}\label{600}
Suppose $X$ is a Dedekind complete locally solid $f$-ring with Fatou topology. Then ${\sf Hom^{b}_{cr}}(X)$ is a locally solid $\ell$-ring with respect to the $cr$-convergence topology.
\end{thm}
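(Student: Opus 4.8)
The plan is to run the same argument that proved Theorems~\ref{601} and the preceding $br$-theorem, now adapted to the $cr$-topology. By Lemma~\ref{40000} we already know that ${\sf Hom^{b}_{cr}}(X)$ is a topological $\ell$-ring, so by \cite[Theorem~4.1]{H} it remains only to check that the positive-part map $T\mapsto T^{+}$ is \emph{uniformly} continuous on ${\sf Hom^{b}_{cr}}(X)$; this is precisely what upgrades ``topological $\ell$-ring'' to ``locally solid $\ell$-ring''. Throughout I would use the Riesz--Kantorovich formula $T^{+}(x)=\sup\{T(u):0\le u\le x\}$ for $x\in X_{+}$ supplied by Theorem~\ref{4}, together with the fact that the Fatou hypothesis gives a base of solid, order closed zero neighborhoods.

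Concretely, I would take nets $(T_{\alpha})$ and $(S_{\alpha})$ in ${\sf Hom^{b}_{cr}}(X)$ such that $(T_{\alpha}-S_{\alpha})$ $cr$-converges to $0$, fix an arbitrary zero neighborhood $W$, and unwind the definition of $cr$-convergence: there is a zero neighborhood $U$ such that for each zero neighborhood $V$ there is an index $\alpha_{0}$ with $(T_{\alpha}-S_{\alpha})(U)\subseteq VW$ for all $\alpha\ge\alpha_{0}$. Taking $U$ solid (shrinking it if necessary), every $u$ with $0\le u\le x$ and $x\in U_{+}$ lies in $U$, so the lattice inequality
\[\sup\{T_{\alpha}(u):0\le u\le x\}-\sup\{S_{\alpha}(u):0\le u\le x\}\]
\[\le\sup\{(T_{\alpha}-S_{\alpha})(u):0\le u\le x\}=(T_{\alpha}-S_{\alpha})^{+}(x)\]
exhibits $(T_{\alpha}-S_{\alpha})^{+}(x)$ as a supremum (hence an order limit) of elements of $VW$; choosing $V$ and $W$ from a solid order closed base makes $VW$ (or a basic neighborhood it contains) absorb this supremum, and running the symmetric inequality with the roles of $T_{\alpha}$ and $S_{\alpha}$ exchanged, solidness then yields $(T_{\alpha}^{+}-S_{\alpha}^{+})(x)\in VW$ for every $x\in U_{+}$ and all $\alpha\ge\alpha_{0}$. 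Thus $(T_{\alpha}^{+}-S_{\alpha}^{+})$ $cr$-converges to $0$, and a final appeal to \cite[Theorem~4.1]{H} completes the proof.

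The step I expect to be the genuine obstacle is not the lattice inequality itself but the bookkeeping forced by the $cr$-topology: the witnessing neighborhood $U$ must be produced once and held fixed while $V$ ranges, and the targets are products $VW$ rather than single neighborhoods, so one has to know that these products still interact well with suprema and with the solidness and order-closedness machinery coming from the Fatou hypothesis (this is where the compatibility $|x\cdot y|\le|x|\cdot|y|$ of the $f$-ring structure and the choice of a good base of zero neighborhoods are used). Once it is checked that a supremum of elements of such a product still lands in a controlled solid order closed set --- the point already used implicitly in Lemma~\ref{40000} --- the rest is essentially a verbatim repetition of the $nr$- and $br$-arguments, so no new ideas are needed beyond that verification.
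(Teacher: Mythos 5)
Your proposal is correct and follows essentially the same route as the paper's own proof: reduce via Lemma~\ref{40000} and \cite[Theorem 4.1]{H} to uniform continuity of $T\mapsto T^{+}$, then use the Riesz--Kantorovich formula of Theorem~\ref{4} together with the lattice inequality for suprema, the solidness of $U$, and the order-closedness coming from the Fatou hypothesis to show $(T_{\alpha}^{+}-S_{\alpha}^{+})(x)\in VW$ for $x\in U_{+}$. Your extra care about the symmetric inequality and about suprema of elements of $VW$ landing back in a controlled solid order closed set is a reasonable tightening of the same argument, not a different approach.
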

\begin{proof}
In the view of Lemma \ref{40000} and \cite[Theorem 4.1]{H}, it is sufficient to show that the lattice operation $T\to T^+$ is uniformly continuous in  ${\sf Hom^{b}_{cr}}(X)$. Let $T\in {\sf Hom^{b}_{cr}}(X)$ and $x\in X_{+}$. By Theorem \ref{4}, we have
 \[T^{+}(x)=\sup\{T(u):  0\leq u\leq x\}.\]
   Now, suppose $(T_{\alpha})$ and $(S_{\alpha})$ are  nets of order bounded continuous group homomorphisms that $(T_{\alpha}-S_{\alpha})$ $cr$-converges to zero. Choose arbitrary neighborhood $W\subseteq X$. There is a zero neighborhood $U\subseteq X$ such that for each zero neighborhood $V\subseteq X$ there exists an $\alpha_0$ with $(T_{\alpha}-S_{\alpha})(U)\subseteq VW$ for each $\alpha\geq\alpha_0$. Now, consider the following lattice ineqality:
 \[\sup\{T_{\alpha}(u): 0\leq u\leq x\}-\sup\{S_{\alpha}(u): 0\leq u\leq x\}\]
 \[\le\sup\{(T_{\alpha}-S_{\alpha})(u): 0\leq u\leq x\}.\]
 Therefore, using the order closedness of neighborhoods $V,W$ and solidness of zero neighborhood $U$, we have
\[{ T_{\alpha}}^{+}(x)-{S_{\alpha}}^{+}(x)\leq ({T_{\alpha}-S_{\alpha}})^{+}(x)\in VW.\]
This would complete the proof.

\end{proof}

\end{document}